\newtheorem{theorem}{Theorem}[section]
\newtheorem{corollary}[theorem]{Corollary}
\newtheorem{lemma}[theorem]{Lemma}
\theoremstyle{definition}
\numberwithin{equation}{section}
\title{Intersection graph and writhe polynomial}
\author{Zhiyun Cheng}
\address{School of Mathematical Sciences, Beijing Normal University, Beijing 100875, China}
\email{czy@bnu.edu.cn}
\subjclass[2020]{57K12, 57M15}
\keywords{virtual knot, intersection graph, writhe polynomial}
\begin{document}
\begin{abstract}
We prove that two virtual knots have equivalent intersection graphs if and only if they have the same writhe polynomial.
\end{abstract}
\maketitle
\section{Introduction}\label{section1}
There has been considerable interest in intersection graphs (also called \emph{circle graphs}) of chord diagrams recently. For example, a finite type invariant, also known as Vassiliev invariant, can be regarded as a function on chord diagrams which satisfies the four-term relation and one-term relation. In 1994, Chmutov and Duzhin conjectured that if two chord diagrams have the same intersection graph, then they are equivalent modulo the four-term relation. Together with the fact that the intersection graph is preserved under the mutations of chord diagrams, it follows that all finite type invariants take the same value on mutant knots. However, later it was found that there exists a finite type invariant which distinguishes the Conway knot and Kinoshita-Terasaka knot \cite{MC1996}. Then a question arises, what kind of finite type invariant depends only on the intersection graphs, rather than the chord diagrams? In \cite{CL2007}, Chmutov and Lando proved that a finite type invariant does not distinguish mutants if and only if the function only depends on the intersection graphs of chord diagrams.

Another important feature of finite type invariants comes from the associated Gauss diagrams of knot diagrams. Sometimes, the two terms \emph{Gauss diagram} and \emph{chord diagram} are used indiscriminately. But here, we use the term chord diagram if a chord corresponds to a double point of a singular knot, and when a chord corresponds to a classical crossing point of a knot diagram, we would use the term Gauss diagram. In 1994, Michael Polyak and Oleg Viro gave a description of finite type invariants of order two and three by means of Gauss diagram representations \cite{PV1994}. Later, Mikhail Goussarov \cite{GPV2000} proved that any integer-valued finite type invariant can be described by a Gauss diagram formula. In other words, each integer-valued finite type invariant can be calculated by counting the subdiagrams of the Gauss diagram with weights, where the weight of a subdiagram is the product of the signs of all the chords in the subdiagram. This suggests us to investigate the set of all Gauss diagrams. However, not every Gauss diagram can be realized as the Gauss diagram of a classical knot diagram. This motivates us to study the theory of virtual knots.

Given a virtual knot diagram, there is an associated Gauss diagram and a corresponding intersection graph. Unlike the intersection graphs of finite type invariants, which are undirected simple graphs, an intersection graph derived from a Gauss diagram is a directed graph with signed vertices. Since virtual knots are defined as virtual knot diagrams modulo generalized Reidemeister moves, we need to add some equivalence relations to these intersection graphs. A natural question is, when two virtual knots have equivalent intersection graphs? The main aim of this paper is to answer this question.

\begin{theorem}\label{Theorem1}
Two virtual knots have equivalent intersection graphs if and only if they have the same writhe polynomial.
\end{theorem}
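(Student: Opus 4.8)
The plan is to prove the two implications separately; the substance is in the "if" direction. First I would observe that the writhe polynomial is already a function of the intersection graph. For a Gauss diagram $D$ of a virtual knot $K$ and a classical crossing $c$, the index of $c$ is computed directly from $\Gamma(D)$ as $\mathrm{ind}(c)=\sum_{d}\mathrm{sgn}(d)\,\varepsilon_{c}(d)$, where $d$ runs over the chords interlacing $c$ and $\varepsilon_{c}(d)=\pm 1$ is the direction of the edge between $c$ and $d$ in $\Gamma(D)$ (equivalently, the sense in which the arrow of $d$ meets the two arcs that $c$ cuts out); one checks that $\varepsilon_{c}(d)=-\varepsilon_{d}(c)$, which is exactly why the interlacement data is recorded by a \emph{directed} graph. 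Hence $W_{K}(t)=\sum_{c}\mathrm{sgn}(c)\bigl(t^{\mathrm{ind}(c)}-1\bigr)$ depends only on $\Gamma(D)$. For the "only if" direction it then remains to verify that this expression is unchanged by each of the three moves generating equivalence of intersection graphs: $\Omega_{1}$ inserts or deletes an isolated signed vertex, which is a crossing of index $0$ that occurs in no other index; $\Omega_{2}$ inserts or deletes a pair of adjacent vertices of opposite sign with the same adjacency and edge-directions to every other vertex, so the pair has equal indices (contributing $0$) while the $\pm\mathrm{sgn}$ it adds to any third index cancels within the pair; and $\Omega_{3}$ exchanges one local configuration for another, which a short direct computation shows leaves $W_{K}(t)$ unchanged. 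Therefore equivalent intersection graphs have equal writhe polynomials.

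For the converse I would introduce canonical models. The writhe polynomial $P(t)$ of a virtual knot satisfies $P(1)=0$ and $P'(1)=0$, equivalently $P(t)$ is a $\mathbb{Z}[t,t^{-1}]$-linear combination of the polynomials $t^{k}(t-1)^{2}$ with $k\in\mathbb{Z}$, and for each $k$ there is an explicit small virtual knot (the virtual trefoil together with its mirrors and winding shifts) whose intersection graph $\Lambda_{k}$ has writhe polynomial $t^{k}(t-1)^{2}$; write $\overline{\Lambda}_{k}$ for the mirror graph, with writhe polynomial $-t^{k}(t-1)^{2}$. Given $P(t)=\sum_{k}a_{k}\,t^{k}(t-1)^{2}$, let $S_{P}$ be the disjoint union, over all $k$, of $|a_{k}|$ copies of $\Lambda_{k}$ when $a_{k}>0$ and of $\overline{\Lambda}_{k}$ when $a_{k}<0$; since the intersection graph of a connected sum is the disjoint union of the summands' intersection graphs and the writhe polynomial is additive under connected sum (a crossing's index in a summand is unchanged in the sum), $S_{P}$ is the intersection graph of a virtual knot with writhe polynomial exactly $P$. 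The whole matter then reduces to the \emph{normalization lemma}: $\Gamma(K)\sim S_{W_{K}}$ for every virtual knot $K$. Granting it, $W_{K_{1}}(t)=W_{K_{2}}(t)=P$ gives $\Gamma(K_{1})\sim S_{P}\sim\Gamma(K_{2})$, which is the "if" direction; combined with the previous paragraph, the writhe polynomial becomes a complete invariant of intersection graphs up to equivalence.

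I would prove the normalization lemma in two stages. First, repeatedly deleting isolated vertices (by $\Omega_{1}$) and removable opposite-sign pairs (by $\Omega_{2}$) --- each such deletion yields the intersection graph of a virtual knot and, by the index computations above, leaves the writhe polynomial unchanged --- reduces $\Gamma(K)$ to a \emph{reduced} graph: one with no isolated vertex and no removable pair, still with writhe polynomial $W_{K}$. Second, one shows that a reduced graph is $\Omega$-equivalent to a disjoint union of the elementary pieces $\Lambda_{k},\overline{\Lambda}_{k}$; here $\Omega_{3}$-moves are indispensable --- together with the temporary creation of auxiliary $\Omega_{1}$- or $\Omega_{2}$-configurations in order to unlock an $\Omega_{3}$ --- in order to bring two vertices of equal index and opposite sign into matching position so that they become removable, iterating until only elementary pieces remain. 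A short calculus of the pieces (notably $\Lambda_{k}\sqcup\overline{\Lambda}_{k}\sim\varnothing$) then lets one adjust the multiplicities to those of $S_{W_{K}}$, completing the proof. I expect this second stage --- establishing that $\Omega_{1},\Omega_{2},\Omega_{3}$ genuinely suffice, with no residual obstruction, to carry an arbitrary intersection graph of a virtual knot onto its model $S_{W_{K}}$ --- to be the main difficulty and the step demanding the most care. (An equivalent formulation: encode $\Gamma(K)$ by its skew-symmetric adjacency matrix together with the vector of vertex signs, observe that $\Omega_{1},\Omega_{2},\Omega_{3}$ act by Turaev-type stabilizations, and reduce this data to a canonical normal form whose invariants are the coefficients of $W_{K}$.)
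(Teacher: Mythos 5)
Your first direction (equivalent graphs $\Rightarrow$ equal writhe polynomials) matches the paper: the index of a crossing is read off from the signed directed interlacement data, so $W_K$ is a function of $\Gamma(D)$, and one checks invariance under the generating moves. One caveat even here: the paper's equivalence relation is generated by \emph{four} moves $\omega_0,\omega_1,\omega_2,\omega_3$, where $\omega_0$ creates or cancels a pair of parallel edges (a bigon between two chords) and makes the graphs non-simple; you never mention it, so your invariance check is incomplete and, more importantly, your converse would be working with a different (finer) equivalence relation than the one in the statement.

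The substantive problem is in the converse. Your strategy --- reduce every intersection graph to a canonical model $S_{W_K}$ built from elementary pieces $\Lambda_k,\overline{\Lambda}_k$ --- is a genuinely different route from the paper's, but the step you yourself flag as ``the main difficulty,'' namely that a reduced graph is move-equivalent to a disjoint union of elementary pieces with multiplicities matching the coefficients of $W_K$, is exactly the content of the theorem and is left entirely unproved. There is no decreasing measure, no induction, and no concrete description of how $\omega_3$ brings an arbitrary pair of opposite-sign, equal-index vertices into cancelling position; asserting that the moves ``genuinely suffice, with no residual obstruction'' is a restatement of the claim, not an argument. The paper avoids having to prove any such normal-form result by invoking the theorem of Nakamura--Nakanishi--Satoh that two virtual knots have the same writhe polynomial if and only if their Gauss diagrams are related by shell moves $S_1,S_2$; the whole proof then reduces to a finite case check that each shell move acts on the intersection graph by a composite of $\omega_0,\omega_2,\omega_3$ (together with a derived move $\omega_3'$ established in a lemma), and it is precisely in these case checks that $\omega_0$ is indispensable. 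To complete your proposal you would either need to supply the normalization argument in full --- which I expect to be at least as hard as the original problem --- or, as the paper does, import a local-move characterization of the writhe polynomial at the level of Gauss diagrams.
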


In Section \ref{section2}, we recall the necessary definitions of virtual knots. The equivalence relation on intersection graphs will be defined in Section \ref{section2.1}. A key ingredient, a local move description of the writhe polynomial is given in Section \ref{section2.2}. Section \ref{section3} is devoted to the proof of Theorem \ref{Theorem1}.

\section{Virtual knot theory and writhe polynomial}\label{section2}
\subsection{Virtual knots}\label{section2.1}
Virtual knot theory, which was introduced by L. Kauffman in \cite{Kau1999}, studies the embeddings of circles in thickened surfaces up to isotopies, self-homeomorphisms of the surface and stable equivalences. Virtual knot theory reduces to classical knot theory when the genus of the surface is equal to zero. A convenient way to describe virtual knots is to use virtual knot diagrams.

A virtual knot diagram is simply a classical knot diagram with some classical crossing points replaced by virtual crossing points, where each virtual crossing point is usually denoted by a small circle. Two virtual knot diagrams are equivalent if they are related by a sequence of generalized Reidemeister moves, see Figure \ref{figure1}. A \emph{virtual knot} is defined to be an equivalence class of virtual knot diagrams.

\begin{figure}[h]
\centering
\includegraphics{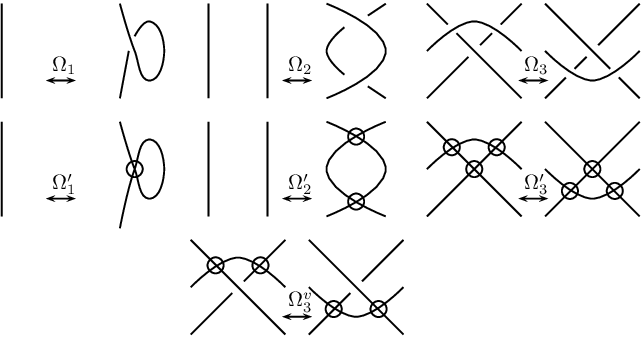}\\
\caption{Generalized Reidemeister moves}\label{figure1}
\end{figure}

Let $K$ be an oriented virtual knot and $D$ a virtual knot diagram of $K$, one can define an associated Gauss diagram $G(D)$ as follows. The Gauss diagram $G(D)$ consists of a planar counterclockwise oriented circle which corresponds to the preimage of the knot diagram, and some signed and directed chords spanning the circle. Here each chord connects the two preimages of a classical crossing point, directed from the preimage of the overcrossing to the preimage of the undercrossing. Finally, each chord is assigned with a sign, which coincides with the sign of the corresponding crossing point.

It is worth pointing out that for a given Gauss diagram, there exist infinitely many different virtual knot diagrams correspond to it. However, all of them correspond to the same virtual knot \cite{GPV2000}.

To a Gauss diagram $G$, the \emph{intersection graph} $\Gamma(G)$ is the graph whose vertices correspond to the chords in $G$ and two vertices are connected by an edge if and only if the corresponding two chords have an intersection point. Each chord can be assigned with a direction as follows. Suppose $c_1$ and $c_2$ are two chords in $G$ such that $c_1\cap c_2\neq\emptyset$, let us use $v_1, v_2$ to denote the two vertices in $\Gamma(G)$ corresponding to $c_1$ and $c_2$ respectively. The edge connecting $v_1, v_2$ is directed from $v_1$ to $v_2$ if locally around the intersection point the orientation $(c_2, c_1)$ coincides with the standard orientation of $\mathbb{R}^2$. Finally, the sign of each vertex in $\Gamma(G)$ is set to be the same as the sign of the corresponding chord in $G$. See Figure \ref{figure2} for an example of the virtual trefoil knot.

\begin{figure}[h]
\centering
\includegraphics{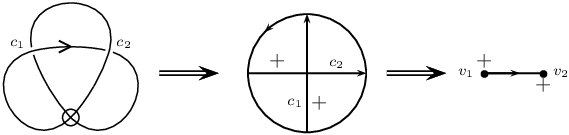}\\
\caption{Virtual trefoil knot, Gauss diagram and intersection graph}\label{figure2}
\end{figure}

Since virtual knot diagrams can be changed by generalized Reidemeister moves, it is necessary to introduce several corresponding local moves on the set of intersection graphs. We say two intersection graphs are \emph{equivalent} if they can be transformed into one another by a sequence of moves $\omega_0, \omega_1, \omega_2, \omega_3$ illustrated in Figure \ref{figure3}.

\begin{figure}[h]
\centering
\includegraphics{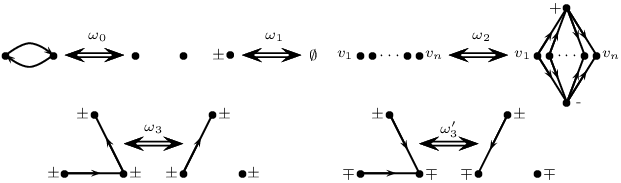}\\
\caption{Local moves on intersection graphs}\label{figure3}
\end{figure}

Here we have some remarks on the local moves depicted in Figure \ref{figure3}. First, the sign of a vertex can be arbitrarily chosen if it is not specified. The reader has recognized that $\omega_1, \omega_2, \omega_3$ are nothing but the deformations on intersection graphs induced by the three classical Reidemeister moves. Note that the vertex on the left side of $\omega_1$ is isolated, and both the positive vertex and negative vertex on the right side of $\omega_2$ have degree $n\geq0$. Reidemeister moves $\Omega_1', \Omega_2', \Omega_3'$ and $\Omega_3^v$ which involve virtual crossing point have no effect on Gauss diagrams, hence preserve the intersection graphs. On the other hand, according to \cite[Theorem 1.2]{Pol2010}, the Reidemeister moves corresponding to $\omega_1, \omega_2, \omega_3$ form a generating set of Reidemeister moves, which guarantees that equivalent virtual knot diagrams have equivalent intersection graphs. For $\omega_0$, which seems a bit odd at first glance, it makes the intersection graph not simple anymore. In order to make it looks reasonable, let us make a few slight modifications to the definition of Gauss diagram. Now we do not require that each chord is a straight segment. Actually, it can be a curved segment inside of the big circle, with no self-intersection. And two Gauss diagrams are considered to be \emph{equivalent} if they are regular homotopy equivalent. Now $\Omega_0$ corresponds to the (dis)appearance of a bigon between two chords.

\subsection{Writhe polynomial}\label{section2.2}
Roughly speaking, there are two different kinds of virtual knot invariants. The first kind comes from the invariants of classical knots. Many classical knot invariants, such as the knot group, knot quandle, Alexander polynomial and Jones polynomial, can be directly defined on virtual knots. Some of them can be greatly enhanced by using the extra information of virtual crossings. The other kind of virtual knot invariants were found later than the introduction of virtual knots. Usually, most of them cannot tell anything interesting for classical knots, but they do provide a lot of useful information for virtual knots. The \emph{writhe polynomial} \cite{CG2013} (also known as the \emph{affine index polynomial} \cite{Kau2013}, see also \cite{Dye2013,Im2013,ST2014}), belongs to the second class.

According to \cite{GPV2000}, a finite type invariant of order one should be the sum of the signs of all chords in the Gauss diagram, which turns out to be the writhe of the knot diagram. Therefore, there exists no finite type invariant of order one for classical knots. However, there do exist finite type invariant of order one for virtual knots. One method to overcome the difficulty is to assign a weight or an index to each chord, then one obtains a weighted sum of all chords, rather than the sum of the signs of all chords.

Let $D$ be a virtual knot diagram and $G$ the corresponding Gauss diagram, choose a chord $c$, we define
\begin{itemize}
\item $r_+(c)$ to be the number of positive chords crossing $c$ from left to right;
\item $r_-(c)$ to be the number of negative chords crossing $c$ from left to right;
\item $l_+(c)$ to be the number of positive chords crossing $c$ from right to left;
\item $l_-(c)$ to be the number of negative chords crossing $c$ from right to left.
\end{itemize}
See Figure \ref{figure4}. Then the \emph{chord index} of $c$ is defined as
\begin{center}
Ind$(c)=r_+(c)-r_-(c)-l_+(c)+l_-(c)$.
\end{center}
By using this, we define the writhe polynomial
\begin{center}
$W_K(t)=\sum\limits_cw(c)t^{\text{Ind}(c)}-w(D)$.
\end{center}
Here $K$ is the virtual knot presented by the knot diagram $D$, $w(c)$ denotes the sign of $c$ and $w(D)=\sum\limits_cw(c)$ is the writhe. The reader is referred to \cite{CFGMX} for some recent progress on index type invariants of virtual knots.

\begin{figure}[h]
\centering
\includegraphics[width=3cm]{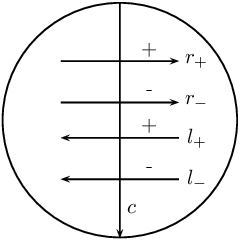}\\
\caption{The definition of the chord index}\label{figure4}
\end{figure}

For a given polynomial invariant, there are two natural questions:
\begin{enumerate}
  \item Which polynomial can be realized as the writhe polynomial of some virtual knot?
  \item When two virtual knots have the same writhe polynomial?
\end{enumerate}
The answer to the first question can be found in \cite{ST2014}, see also \cite[Proposition 4.5]{CFGMX}. It was proved that for a given polynomial $f(t)\in\mathbb{Z}[t, t^{-1}]$, there exists a virtual knot $K$ such that $W_K(t)=f(t)$ if and only if $f(1)=0$ and $f'(1)=0$. Recently, Nakamura, Nakanishi and Satoh gave a local move description of the writhe polynomial \cite{NNS2020}, which provided an answer to the second question.

\begin{theorem}[\cite{NNS2020}]\label{theorem2.1}
Two virtual knots have the same writhe polynomial if and only if they are related by a finite sequence of shell moves.
\end{theorem}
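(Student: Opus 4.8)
The plan is to establish the two implications separately: that a shell move preserves $W_K(t)$ (the forward direction) by a direct local computation on Gauss diagrams, and that equal writhe polynomials force shell-equivalence (the converse) by reducing every diagram to a normal form that is visibly determined by $W_K(t)$.

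For the forward direction I would verify invariance by a finite local check. A shell move alters $G(D)$ only inside a small region, changing the signs, directions, and mutual intersections of the chords there; I would record, for each local picture defining the move, the induced change in every chord index $\text{Ind}(c)$ and in the writhe $w(D)$, and confirm that the change in $\sum_c w(c)t^{\text{Ind}(c)}$ is exactly compensated by the change in $w(D)$, so that $W_K(t)$ is unchanged. The shell move is engineered precisely so that these two contributions cancel: the crossings it introduces or removes contribute indexed weights that sum to the accompanying change in the writhe, and external chords meet the inserted configuration in a balanced way, leaving their own indices unaffected.

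For the converse I would pass to the underlying algebra. Writing $J_n=\sum_{\text{Ind}(c)=n}w(c)$, the writhe polynomial is determined by the integers $\{J_n\}_{n\neq0}$, which by the realizability result lie in the lattice $\{f:f(1)=f'(1)=0\}=(t-1)^2\mathbb{Z}[t,t^{-1}]$, freely generated by the monomials $(t-1)^2t^k$. For each such generator I would fix a small \emph{standard shell} realizing it, and for each $f$ in the lattice a \emph{normal form} $D_f$, namely a trivial circle carrying a juxtaposed string of standard shells realizing $f$. The heart of the argument is then the claim that every virtual knot diagram $D$ is shell-equivalent to $D_{W_K}$. I would prove this by induction on the number of classical crossings, using shell moves together with the (writhe-polynomial-compatible) Reidemeister moves to standardize the crossings one at a time into standard shells of their own indices, to slide standard shells freely past one another along the circle, and to cancel any two standard shells of equal index and opposite sign. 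Granting the claim, the theorem is immediate: if $W_{K_1}=W_{K_2}=f$ then both are shell-equivalent to the single normal form $D_f$, hence to each other.

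The main obstacle will be the standardization-and-cancellation step inside this induction. The difficulty is that any manipulation isolating a chosen crossing generically perturbs the indices of the remaining chords, so one must exhibit explicit shell-move sequences that disentangle the Gauss diagram while keeping the full index function under control at every stage. Proving that such sequences always exist, and that the reduction necessarily terminates at the canonical $D_{W_K}$ — with the constraint $\sum_n nJ_n=W_K'(1)=0$ being exactly what guarantees that every surplus crossing can ultimately be paired off and removed — is the crux, and is where the fine combinatorial structure of the shell move must be used in full.
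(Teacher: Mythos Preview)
The paper does not prove Theorem~2.1 at all: it is quoted from \cite{NNS2020} and used as a black-box ingredient in the proof of Theorem~\ref{Theorem1}. There is therefore no argument in this paper against which to compare your proposal.

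For what it is worth, your outline is a reasonable strategy for the result itself, and the normal-form reduction you sketch is the standard shape of such ``local moves characterize an invariant'' theorems. The forward direction is, as you say, a finite local check (note in particular that $S_2$ adds two chords with equal index and opposite sign, so both $w(D)$ and $\sum_c w(c)t^{\mathrm{Ind}(c)}$ are separately unchanged, not merely their difference). Your identification of the crux is accurate: the substantive work lies entirely in showing that shell moves suffice to disentangle an arbitrary Gauss diagram into a juxtaposition of standard pieces, and your sketch leaves that step essentially unaddressed beyond naming it. If you want to reconstruct the proof rather than cite it, that is the part you would need to write out in full; otherwise the appropriate thing here is simply to cite \cite{NNS2020}, as the paper does.
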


Shell moves are local deformations on Gauss diagrams, which have two different types, say $S_1$ and $S_2$. See Figure \ref{figure5}. Here in the shell move $S_1$, the signs $w(c_1)$ and $w(c_2)$ can be arbitrarily chosen. For the shell move $S_2$, the orientations and the signs of $c_1$ and $c_2$ can be arbitrarily chosen. However, the directions and signs of $c_3$ and $c_4$ should be carefully chosen such that $w(c_3)=-w(c_4)$, Ind$(c_3)=$Ind$(c_4)$, and Ind$(c_1)$, Ind$(c_2)$ are both preserved under the move $S_2$.

\begin{figure}[h]
\centering
\includegraphics{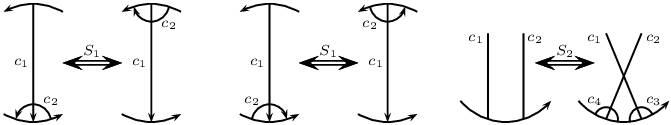}\\
\caption{Shell moves}\label{figure5}
\end{figure}

\section{Proof of Theorem \ref{Theorem1}}\label{section3}
Before giving the proof of Theorem \ref{Theorem1}, we need a simple lemma.
\begin{lemma}\label{lemma3.1}
The two intersection graphs on the two sides of the move $\omega_3'$ in Figure \ref{figure3} are equivalent.
\end{lemma}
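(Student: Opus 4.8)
The plan is to show that $\omega_3'$ is a \emph{consequence} of the four moves $\omega_0,\omega_1,\omega_2,\omega_3$ that define the equivalence relation on intersection graphs. Since $\omega_3'$ is (by its name) the intersection-graph shadow of a variant of the third Reidemeister move --- one in which the three strands are cyclically permuted or one crossing has its over/under data or sign altered relative to the standard $\omega_3$ --- the natural strategy is to realize it at the level of Gauss diagrams and then decompose the corresponding Reidemeister move into moves already known to preserve the equivalence class. First I would recall that, by \cite[Theorem 1.2]{Pol2010}, the three classical Reidemeister moves used in defining $\omega_1,\omega_2,\omega_3$ generate all oriented Reidemeister moves for virtual knots; in particular the specific variant underlying $\omega_3'$ can be expressed through them. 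The subtlety is that the \emph{graph-level} moves $\omega_1,\omega_2,\omega_3$ as drawn in Figure \ref{figure3} come with specified sign/direction data, so I need to track that bookkeeping carefully when translating a sequence of Reidemeister moves back into a sequence of $\omega_i$'s.

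Concretely, the key steps are as follows. Step one: fix Gauss diagrams $G$ and $G'$ realizing the two sides of $\omega_3'$, differing only in the small region where the three chords $c_1,c_2,c_3$ pairwise intersect. Step two: exhibit an explicit ambient Gauss diagram (with a few auxiliary chords placed so that the relevant strands can be maneuvered) in which one performs, in order, an $\omega_2$ move to create a cancelling pair of crossings, one or two $\omega_3$ moves (possibly together with an $\omega_0$ move to handle the bigon that appears between parallel chords), and a final $\omega_2$ move to destroy the cancelling pair --- the net effect being precisely $\omega_3'$. Step three: verify that in each intermediate step the sign and direction hypotheses attached to $\omega_1,\omega_2,\omega_3$ in Figure \ref{figure3} are met; this is where the role of $\omega_0$ matters, since it is exactly the move that lets two chords slide past each other through a bigon without changing anything else.

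The main obstacle I anticipate is the sign and direction bookkeeping in the chained sequence: the move $\omega_3$ as specified may require particular signs on the three vertices, and the variant $\omega_3'$ may involve a configuration where those signs do not literally match, forcing one to route through $\omega_0$ and $\omega_2$ in a nonobvious order (or to apply the standard $\omega_3$ twice, with a reflection in between) to correct the discrepancy. A clean way to sidestep a long case analysis is to appeal directly to the generating-set statement: since the Reidemeister move corresponding to $\omega_3'$ lies in the group generated by those corresponding to $\omega_1,\omega_2,\omega_3$, and since equivalent virtual knot diagrams have equivalent intersection graphs (as already argued in Section \ref{section2.1} using \cite{Pol2010}), the two sides of $\omega_3'$ --- being intersection graphs of two diagrams related by that single Reidemeister move --- must be equivalent. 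I would present the explicit local picture for concreteness but lean on this generating-set argument to keep the proof short.
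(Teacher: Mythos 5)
Your proposal matches the paper's proof essentially exactly: the paper likewise first appeals to the generating-set result of \cite[Theorem 1.2]{Pol2010} (noting that $\omega_3'$ is the shadow of a variant of $\Omega_3$ expressible through the generating Reidemeister moves), and then gives the same concrete sequence --- an $\omega_2$ creating a cancelling pair of opposite-sign vertices, an application of $\omega_3$, and a final $\omega_2$ deleting a cancelling pair (one member of which is the original positive vertex) --- to exhibit the right-hand side of $\omega_3'$ directly. The only cosmetic difference is that the paper works purely at the level of intersection graphs (Figure \ref{figure6}) rather than routing through Gauss diagrams, and it notes the symmetric sign case separately; your sign/direction bookkeeping concerns are exactly what that figure resolves.
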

\begin{proof}
It is not difficult to find that the local move $\omega_3'$ corresponds to one type of Reidemeister move $\Omega_3$. The result follows immediately since the Reidemeister moves corresponding to $\omega_1, \omega_2, \omega_3$ form a generating set of Reidemeister moves.

More concretely, let we use $v_1, v_2, v_3$ to denote the three vertices on the left side of $\omega_3'$ such that $w(v_1)=+1$ and $w(v_2)=w(v_3)=-1$. Without loss of generality, we assume that in the intersection graph the set of all vertices adjacent to $v_1$ is $\{v_3, v_4, v_5\}$. See the first graph in Figure \ref{figure6}. Now we can apply the move $\omega_2$, which brings us two new vertices with opposite signs. Then after applying the move $\omega_3$, we can use $\omega_2$ to delete two vertices with opposite signs, one of which is $v_1$. Finally, we obtain a new intersection graph isomorphic to the one on the right side of $\omega_3'$. Note that the vertex $v_1$ has been replaced by the new positive vertex $v_1'$.

\begin{figure}[h]
\centering
\includegraphics{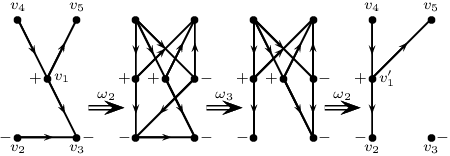}\\
\caption{Generating $\omega_3'$ by $\omega_2$ and $\omega_3$}\label{figure6}
\end{figure}

The other case $w(v_1)=-1$ and $w(v_2)=w(v_3)=+1$ can be proved similarly.
\end{proof}

Now we give the proof of Theorem \ref{Theorem1}.
\begin{proof}
In order to prove that virtual knots with equivalent intersection graphs have the same writhe polynomial, it is sufficient to show that the writhe polynomial can be derived from the intersection graph, and equivalent intersection graphs give the same writhe polynomial. Let $\Gamma$ be an intersection graph and $v$ be a vertex of degree $m$. Let us denote the vertices adjacent to $v$ by $v_1, \cdots, v_n, v_{n+1}, \cdots, v_m$, where the edge $vv_i$ is directed from $v_i$ to $v$ if $1\leq i\leq n$, otherwise it is directed from $v$ to $v_i$. Note that a vertex may appear several times in $\{v_1, \cdots, v_n, v_{n+1}, \cdots, v_m\}$, since multiple edges are allowed. Now we assign an integer Ind$(v)$ to the vertex $v$, defined by
\begin{center}
Ind$(v)=\sum\limits_{1\leq i\leq n}w(v_i)-\sum\limits_{n+1\leq i\leq m}w(v_i)$.
\end{center}
It is easy to see that this is nothing but the index of the chord corresponding to $v$. It follows that the writhe polynomial is equal to $\sum\limits_vw(v)t^{\text{Ind}(v)}-\sum\limits_vw(v)$. It is routine to check that $\sum\limits_vw(v)t^{\text{Ind}(v)}-\sum\limits_vw(v)$ is invariant under the local moves $\omega_0, \omega_1, \omega_2, \omega_3$.

Now suppose we are given two virtual knots $K_1, K_2$ with the same writhe polynomial. Let $D_i$ $(i\in\{1, 2\})$ be a virtual knot diagram of $K_i$ and $\Gamma_i$ be the corresponding intersection graph. What we need to do is to show that $\Gamma_1$ and $\Gamma_2$ are equivalent. According to Theorem \ref{theorem2.1}, it suffices to prove that the corresponding deformations of shell moves on intersection graphs can be realized by $\omega_0, \omega_1, \omega_2, \omega_3$.

For the shell move $S_1$, obviously it preserves the intersection graph. There is nothing need to prove in this case. For the shell move $S_2$, we will discuss each possible case in turn.
\begin{enumerate}
\item Both $c_1, c_2$ are upward directed, and $w(c_1)=w(c_2)=+1, w(c_3)=-w(c_4)=\pm1$. The proof of this case is illustrated in Figure \ref{figure7}. Here we use $v_i$ $(1\leq i\leq 4)$ to denote the corresponding vertex of $c_i$. Note that both the degree of $v_3$ and the degree of $v_4$ are equal to one. The leftmost and rightmost intersection graphs in Figure \ref{figure7} correspond to the Gauss diagram on the right side of $S_2$ in Figure \ref{figure5}, with different choices of $w(c_3)=-w(c_4)$. The middle intersection graph consisting of two positive vertices corresponds to the Gauss diagram on the left side of $S_2$ in Figure \ref{figure5}.
\begin{figure}[h]
\centering
\includegraphics{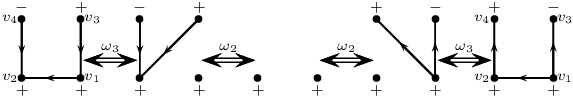}\\
\caption{Realizing the shell move $S_2$ by $\omega_2$ and $\omega_3$}\label{figure7}
\end{figure}

\item Both $c_1, c_2$ are upward directed, and $w(c_1)=w(c_2)=-1, w(c_3)=-w(c_4)=\pm1$. See Figure \ref{figure8} for the proof. As before, the leftmost intersection graph corresponds to the case $w(c_3)=+1, w(c_4)=-1$. The opposite situation corresponds to the rightmost one.
\begin{figure}[h]
\centering
\includegraphics{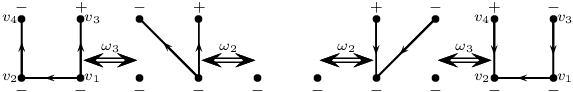}\\
\caption{Realizing the shell move $S_2$ by $\omega_2$ and $\omega_3$}\label{figure8}
\end{figure}

\item Both $c_1, c_2$ are upward directed, and $w(c_1)=-w(c_2)=+1, w(c_3)=-w(c_4)=\pm1$. See Figure \ref{figure9}. Thanks to Lemma \ref{lemma3.1}, here we are allowed to use the move $\omega_3'$.
\begin{figure}[h]
\centering
\includegraphics{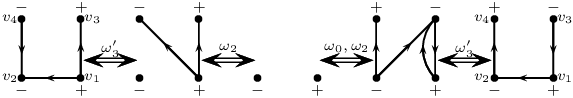}\\
\caption{Realizing the shell move $S_2$ by $\omega_0, \omega_2$ and $\omega_3'$}\label{figure9}
\end{figure}

\item Both $c_1, c_2$ are upward directed, and $w(c_1)=-w(c_2)=-1, w(c_3)=-w(c_4)=\pm1$. The proof of this case is illustrated in Figure \ref{figure10}.
\begin{figure}[h]
\centering
\includegraphics{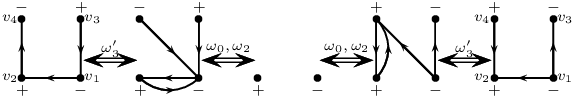}\\
\caption{Realizing the shell move $S_2$ by $\omega_0, \omega_2$ and $\omega_3'$}\label{figure10}
\end{figure}

\item The chord $c_1$ is upward directed but $c_2$ is downward directed, and $w(c_1)=w(c_2)=+1, w(c_3)=-w(c_4)=\pm1$. See Figure \ref{figure11}.
\begin{figure}[h]
\centering
\includegraphics{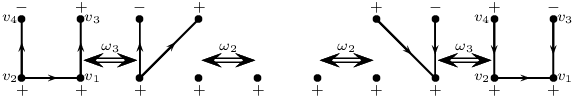}\\
\caption{Realizing the shell move $S_2$ by $\omega_2$ and $\omega_3$}\label{figure11}
\end{figure}

\item The chord $c_1$ is upward directed but $c_2$ is downward directed, and $w(c_1)=w(c_2)=-1, w(c_3)=-w(c_4)=\pm1$. See Figure \ref{figure12}.
\begin{figure}[h]
\centering
\includegraphics{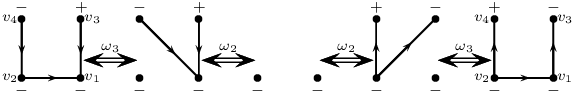}\\
\caption{Realizing the shell move $S_2$ by $\omega_2$ and $\omega_3$}\label{figure12}
\end{figure}

\item The chord $c_1$ is upward directed but $c_2$ is downward directed, and $w(c_1)=-w(c_2)=+1, w(c_3)=-w(c_4)=\pm1$. See Figure \ref{figure13}.
\begin{figure}[h]
\centering
\includegraphics{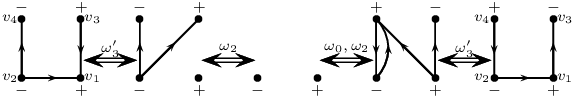}\\
\caption{Realizing the shell move $S_2$ by $\omega_0, \omega_2$ and $\omega_3'$}\label{figure13}
\end{figure}

\item The chord $c_1$ is upward directed but $c_2$ is downward directed, and $w(c_1)=-w(c_2)=-1, w(c_3)=-w(c_4)=\pm1$. See Figure \ref{figure14}.
\begin{figure}[h]
\centering
\includegraphics{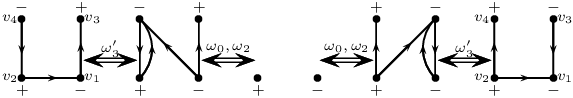}\\
\caption{Realizing the shell move $S_2$ by $\omega_0, \omega_2$ and $\omega_3'$}\label{figure14}
\end{figure}
\end{enumerate}
The case that both $c_1, c_2$ are downward directed and the case that $c_1$ is downward directed but $c_2$ is upward directed can be obtained from the cases discussed above by taking the mirror images. The proof is similar as above, hence we omit it here.
\end{proof}

Theorem \ref{Theorem1} implies that the equivalence classes of intersection graphs of virtual knots contain exactly the same information as the writhe polynomial. Since the characterization problem of the writhe polynomial has been solved in \cite{ST2014}, Theorem \ref{Theorem1} provides a characterization of the intersection graphs of virtual knots up to the local moves $\omega_0, \omega_1, \omega_2, \omega_3$. For each integer $k\geq0$, we define four sequences of intersection graphs $\Gamma(L_k), \Gamma(L_k^{-1}), \Gamma(R_k), \Gamma(R_k^{-1})$, see Figure \ref{figure15}. Then we have the following corollary. We remark that the original characterization problem of the intersection graphs (or circle graphs) was solved by Bouchet in \cite{Bou1994} by a list of excluded vertex minors.
\begin{figure}[h]
\centering
\includegraphics{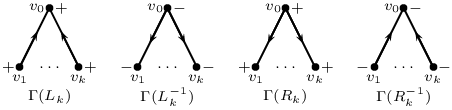}\\
\caption{Four sequences of intersection graphs}\label{figure15}
\end{figure}

\begin{corollary}\label{corollary3.2}
A vertex-signed directed graph can be realized as the intersection graph of a virtual knot diagram if and only if it is equivalent to a disjoint union of some $\Gamma(L_k), \Gamma(L_k^{-1}), \Gamma(R_k)$ and $\Gamma(R_k^{-1})$.
\end{corollary}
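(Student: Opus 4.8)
The plan is to combine Theorem~\ref{Theorem1} with the realizability criterion of \cite{ST2014} for writhe polynomials, exploiting the fact that connected sum of virtual knots corresponds simultaneously to concatenation of Gauss diagrams and to disjoint union of intersection graphs. I would begin with two elementary reductions. First, a vertex-signed directed graph is the intersection graph of \emph{some} virtual knot diagram if and only if it is the intersection graph of \emph{some} Gauss diagram, since every Gauss diagram is realized by a virtual knot diagram \cite{GPV2000}. Second, the class of graphs of the form $\Gamma(G)$ for a (curved-chord) Gauss diagram $G$ is closed under the moves $\omega_0,\omega_1,\omega_2,\omega_3$: by construction each of these moves is induced by a modification of the underlying Gauss diagram — inserting or deleting a chord crossing nothing ($\omega_1$), a nested pair of parallel chords of opposite sign meeting a common set of $n$ chords ($\omega_2$), an $\Omega_3$-rearrangement of three pairwise crossing chords ($\omega_3$), or a bigon between two chords ($\omega_0$) — and each such modification of a curved-chord Gauss diagram again yields a curved-chord Gauss diagram. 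Running step by step along a sequence witnessing $\Gamma\sim\Gamma'$, one gets that $\Gamma$ is realizable whenever $\Gamma'$ is; hence realizability is an invariant of the equivalence class, and it suffices to prove both directions of the corollary up to equivalence.

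For the ``if'' direction: each of $\Gamma(L_k),\Gamma(L_k^{-1}),\Gamma(R_k),\Gamma(R_k^{-1})$ is, by its definition in Figure~\ref{figure15}, the intersection graph of an explicit small Gauss diagram, hence realizable. Given Gauss diagrams $G_1,\dots,G_m$, laying them side by side along a common circle produces a Gauss diagram $G_1\#\cdots\#G_m$ (a connected sum of the corresponding virtual knot diagrams) whose intersection graph is $\Gamma(G_1)\sqcup\cdots\sqcup\Gamma(G_m)$, since chords from different summands do not cross. Thus any disjoint union of atoms is realizable, and by the closure statement so is anything equivalent to it.

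For the ``only if'' direction: let $\Gamma=\Gamma(G)$ with $G$ the Gauss diagram of a diagram $D$ of a virtual knot $K$. By the ``only if'' half of the criterion in \cite{ST2014} quoted in Section~\ref{section2.2}, $W_K$ lies in the subgroup $P=\{f\in\mathbb{Z}[t,t^{-1}] : f(1)=f'(1)=0\}$. The group $P$ is free abelian with basis $\{\,t^n-nt+(n-1):n\in\mathbb{Z},\ n\neq 0,1\,\}$, the splitting basis for the surjection $f\mapsto(f(1),f'(1))$ onto $\mathbb{Z}^2$. I would then compute, directly from Figure~\ref{figure15}, the chord indices in the Gauss diagrams underlying $L_k$ and $R_k$ and read off their writhe polynomials; these should be exactly the above basis elements (up to re-indexing, the $L$-family producing the positive exponents and the $R$-family the negative ones), with $W_{L_k^{-1}}=-W_{L_k}$ and $W_{R_k^{-1}}=-W_{R_k}$. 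Hence $W_K$ is a $\mathbb{Z}$-linear combination of atom polynomials, and absorbing each sign into the choice between $L_k$ and $L_k^{-1}$ (resp. $R_k$ and $R_k^{-1}$) we may take all coefficients nonnegative. Let $K'$ be the connected sum of the corresponding multiset of atomic virtual knots. Its Gauss diagram is the concatenation of the atom Gauss diagrams, so the chord indices are unchanged and $W_{K'}=\sum(\text{atom polynomials})=W_K$, while $\Gamma(K')$ is the disjoint union of the corresponding atom graphs (and if $W_K=0$ we take the empty union, so that $\Gamma\sim$ the empty graph). By Theorem~\ref{Theorem1}, $\Gamma=\Gamma(G)$ is equivalent to $\Gamma(K')$, a disjoint union of atoms.

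The only step that is not purely formal is the explicit identification of the writhe polynomials of $L_k$ and $R_k$ from Figure~\ref{figure15} and the verification that they, together with the polynomials of $L_k^{-1}$ and $R_k^{-1}$, generate $P$; I expect this to be a short but careful computation, and the place where one sees that all four families are genuinely needed — the $R$-families to reach negative exponents and the inverse families to realize additive inverses under connected sum. A secondary point to pin down is the closure claim for $\omega_2$, where the two inserted chords carry an arbitrary common neighborhood of size $n$, and for $\omega_0$, where it is the curved-chord convention of Section~\ref{section2.1} that makes the bigon legitimate.
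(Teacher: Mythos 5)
Your proposal is correct and follows essentially the same route as the paper: reduce via Theorem~\ref{Theorem1} to realizing the writhe polynomial by a connected sum of the atomic knots $L_k, L_k^{-1}, R_k, R_k^{-1}$, and observe that connected sum of Gauss diagrams corresponds to disjoint union of intersection graphs. The only difference is that the paper simply cites \cite{CFGMX}*{Proposition 4.5 (4)} for the fact that every $f$ with $f(1)=f'(1)=0$ is the writhe polynomial of such a connected sum, whereas you propose to re-derive this by computing the atoms' writhe polynomials and checking they give a basis of the subgroup $\{f: f(1)=f'(1)=0\}$ --- a legitimate expansion of a step the paper outsources.
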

\begin{proof}
It was proved in \cite[Proposition 4.5 (4)]{CFGMX} that a polynomial $f(t)\in\mathbb{Z}[t, t^{-1}]$ satisfying $f(1)=0$ and $f'(1)=0$ can be realized as the writhe polynomial of a virtual knot. In particular, this virtual knot can be chosen as the connected sum of some virtual knots $L_k, L_k^{-1}, R_k, R_k^{-1}$, see Figure \ref{figure16}. It is easy to see that the intersection graphs of $L_k, L_k^{-1}, R_k, R_k^{-1}$ are exactly the graphs $\Gamma(L_k), \Gamma(L_k^{-1}), \Gamma(R_k), \Gamma(R_k^{-1})$ defined above. The result follows immediately.
\begin{figure}[h]
\centering
\includegraphics{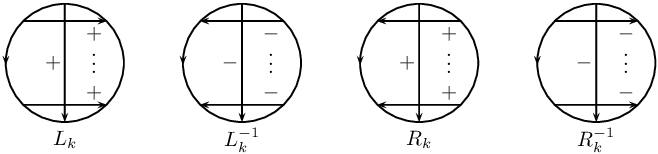}\\
\caption{Four sequences of virtual knots}\label{figure16}
\end{figure}
\end{proof}

We have seen that the intersection graphs of virtual knots contain the same information as the writhe polynomial. It is interesting to ask the same question for some other related knot theory. With a given intersection graph $\Gamma$ and a vertex $v$, the operation \emph{vertex switching} at $v$ changes the sign of $v$ and reverses the orientations of the edges incident to $v$. The corresponding operation on the virtual knot diagram is switching the classical crossing point corresponds to $v$. Flat virtual knots are closely related to virtual knots. Loosely speaking, flat virtual knots can be regarded as virtual knots modulo crossing changes. As a result, if we add the operation vertex switching to the equivalence relations of intersection graphs, then equivalent flat virtual knot diagrams correspond to equivalent intersection graphs. It is interesting to know the accurate information that the intersection graphs contain for flat virtual knots.

\section*{Acknowledgement}
Zhiyun Cheng was supported by the NSFC grant 12071034.

\end{document}